\providecommand{\U}[1]{\protect\rule{.1in}{.1in}}
\newtheorem{theorem}{Theorem}[section]
\newtheorem{corollary}{Corollary}[section]
\newtheorem{remark}{Remark}[section]
\newtheorem{proposition}{Proposition}[section]
\newtheorem{definition}{Definition}[section]
\newcommand{\punt}{\boldsymbol{.}}
\begin{document}

\title{Symbolic solutions of some linear recurrences}
\author{E. Di Nardo \thanks{%
Dipartimento di Matematica e Informatica, Universit\`a degli Studi
della Basilicata, Viale dell'Ateneo Lucano 10, 85100 Potenza,
Italia, elvira.dinardo@unibas.it}, D. Senato
\thanks{Dipartimento di Matematica e Informatica, Universit\`a degli Studi della
Basilicata, Viale dell'Ateneo Lucano 10, 85100 Potenza, Italia,
domenico.senato@unibas.it}}
\date{ }
\maketitle

\begin{abstract}
A symbolic method for solving linear recurrences of
combinatorial and statistical interest is introduced. This method 
essentially relies on a representation of polynomial 
sequences as moments of a symbol that looks as the framework of a random variable
with no reference to any probability space. We give several examples of applications and state an 
explicit form for the class of linear recurrences involving Sheffer 
sequences satisfying a special initial condition. The results 
here presented can be easily implemented in a symbolic software.
\end{abstract}

\textsf{\textbf{keywords}: linear recurrences, Sheffer sequences,
classical umbral calculus, Dyck paths. }\newline
\newline
\textsf{\textsf{AMS subject classification}: 68W30, 11B37, 68R05.}\newline
%-------------------------------------------------------------------------------

\section{Introduction}

%-------------------------------------------------------------------------------
Many counting problems, especially those related to lattice path counting and 
random walks and ballot paths \cite{Narayana}, give rise to recurrence relations.  Once a recursion has been
established, Sheffer polynomials are often a simple and general tool for
finding answers in closed form. Main contributions in this respect are due
to Niederhausen \cite{Nie0,Nie1,Nie2,Nie3}. Further contributions are given by
Razpet \cite{Razpet} and Di Bucchianico and Soto y Koelemeijer \cite
{dibucchianico1}.  Most of the recent papers involving Sheffer
sequences still use the language of finite linear operators
\cite{WW,Nie4}. In this paper we propose a symbolic theory of Sheffer
sequences, developed in \cite{DNS}, in which their main properties are
encoded by using a symbolic method arising from the notion of
umbra, introduced by Rota and Taylor in \cite{SIAM} and developed
in \cite{dinardoeurop}. As a matter of fact,
an umbra looks as the framework of a random variable with no reference to
any probability space, someway getting closer to statistical methods. 
This method has given rise to very efficient algorithms for computing 
unbiased estimators such as $k$-statistics and polykays \cite{fast} and
generalizations of Sheppard's corrections \cite{Sheppard}.  

An umbral calculus consists of a set $A=\{\alpha,\beta, \ldots\},$ called
the \textit{alphabet}, whose elements are named \textit{umbrae} and a linear
functional $E,$ called \textit{evaluation}, defined on the polynomial ring $%
R[A]$ and taking values in $R,$ where $R$ is a suitable ring. The linear
functional $E$ is such that $E[1]=1$ and
\begin{equation*}
E[\alpha^{i} \beta^{j} \cdots\gamma^{k}] = E[\alpha ^{i}]E[\beta^{j}] \cdots
E[\gamma^{k}], \quad {\hbox{(uncorrelation property)}}
\end{equation*}
for any set of distinct umbrae in $A$ and for $i,j,\ldots,k$ nonnegative
integers.

A sequence $a_{0}=1,a_{1},a_{2}, \ldots$ in $R$ is umbrally represented by
an umbra $\alpha$ when $E[\alpha^{n}]=a_{n}$ for all nonnegative integers $n.
$ The elements $\{a_{n}$\} are called \textit{moments} of the umbra $\alpha.$
Special umbrae are:

\textit{i)} the \textit{augmentation} umbra $\epsilon\in A,$ such that $%
E[\epsilon^{n}] = \delta_{0,n},$ for any nonnegative integer $n;$

\textit{ii)} the \textit{unity} umbra $u \in A,$ such that $E[u^{n}]=1,$ for
any nonnegative integer $n;$

\textit{iii)} the \textit{singleton} umbra $\chi \in A,$ such that $%
E[\chi^{1}] = 1$ and $E[\chi^{n}] = 0,$ for any nonnegative integer $n \geq
2;$

\textit{iv)} the \textit{Bell} umbra $\beta \in A,$ such that such that $%
E[\beta^n] = B_n,$ for any nonnegative integers $n,$ where $\{B_n\}$ are the
Bell numbers;

\textit{v)} the \textit{Bernoulli} umbra $\iota \in A,$ such that $%
E[\iota^n] = {\mathcal{B}}_n,$ for any nonnegative integers $n,$ where $\{%
\mathcal{B}_n\}$ are the Bernoulli numbers.

An umbral polynomial is a polynomial $p \in R[A].$ The support of $p$ is the
set of all umbrae occurring in $p.$ If $p$ and $q$ are two umbral
polynomials then $p$ and $q$ are \textit{uncorrelated} if and only if their
supports are disjoint. Moreover, $p$ and $q$ are \textit{umbrally equivalent}
if and only if $E[p]=E[q],$ in symbols $p\simeq q.$

So, in this new setting, to which we refer as the classical umbral
calculus, there are two basic devices. The first one is to
represent a unital sequence of scalars or polynomials by a symbol
$\alpha ,$ called an umbra. In other words, the
sequence $1,a_{1},a_{2},\ldots $ is represented by means of the sequence $%
1,\alpha ,\alpha ^{2},\ldots $ of powers of $\alpha $ via an
operator $E,$ resembling the expectation operator of random
variables. The second device is to represent the same sequence
$1,a_{1},a_{2},\ldots $ by means of distinct umbrae, as it could
happen also in probability theory with independent and identically
distributed random variables. More precisely, two umbrae $\alpha $
and $\gamma $ are \textit{similar} when $\alpha ^{n}$ is umbrally
equivalent to $\gamma ^{n},$ for all nonnegative integer $n,$ in
symbols $\alpha \equiv \gamma \Leftrightarrow \alpha ^{n}\simeq
\gamma ^{n}$ for all $n\geq 0$. It is mainly thanks to these devices that the early umbral calculus \cite%
{SIAM} has had a rigorous and simple formal look.

 Section 2 of this paper is devoted to resume Sheffer umbrae
and their characterizations. Let us underline that a complete
version of the theory of Sheffer polynomials by means of umbrae
can be find in \cite{DNS}. Here, we have chosen to recall
terminology, notation and the basic definitions strictly necessary
to deal with the object of this paper. Section 3 involves linear
recurrence relations whose solutions can be expressed via Sheffer
sequences. Two of the examples we propose involve Fibonacci
numbers. The first example encodes a definite integral by means of
a suitable substitution of a special umbra. The last example is
related to counting patters avoiding ballot paths which has a
connection with Dyck paths \cite{Sapounakis}.

%-----------------------------------------------------------

\section{Sheffer umbrae.}

%-----------------------------------------------------------

\subparagraph{Auxiliary umbrae.}

Thanks to the notion of similar umbrae, the alphabet $A$ has been extended
with the so-called \textit{auxiliary} umbrae, resulting from operations
among similar umbrae . This leads to the construction of a saturated umbral
calculus \cite{SIAM}, in which auxiliary umbrae are handled as elements of
the alphabet. Two special auxiliary umbrae are:

\textit{i)} the \textit{dot-product} of $n$ and $\alpha$, denoted by the
symbol $n \boldsymbol{.} \alpha,$ similar to the sum $\alpha^{\prime
}+\alpha^{\prime\prime}+ \cdots+ \alpha^{\prime\prime\prime},$ where $%
\{\alpha^{\prime},\alpha^{\prime\prime},\ldots,\alpha^{\prime\prime\prime}\}$
is a set of $n$ distinct umbrae, each one similar to the umbra $\alpha;$

\textit{ii)} the inverse of an umbra $\alpha,$ denoted by the symbol $-1
\boldsymbol{.} \alpha,$ such that $-1 \boldsymbol{.} \alpha + \alpha \equiv
\epsilon.$

The computational power of the umbral syntax can be improved through the
construction of new auxiliary umbrae by means of the notion of generating
function of umbrae. The formal power series
\begin{equation}
u + \sum_{n \geq1} \alpha^{n} \frac{t^{n}}{n!}   \label{(gf)}
\end{equation}
is the \textit{generating function} of the umbra $\alpha,$ and it is denoted
by $e^{\alpha t}.$ The notion of umbral equivalence and similarity can be
extended coefficientwise to formal power series (\ref{(gf)}), that is $%
\alpha\equiv\beta\Leftrightarrow e^{\alpha t} \simeq e^{\beta t}$ (see \cite%
{Taylor1} for a formal construction). Note that any exponential formal power
series $f(t) = 1 + \sum_{n \geq1} a_{n} t^{n} / n!$ can be umbrally
represented by a formal power series (\ref{(gf)}). In fact, if the sequence $%
1,a_{1},a_{2},\ldots$ is umbrally represented by $\alpha$ then $%
f(t)=E[e^{\alpha t}],$ that is $f(t) \simeq e^{\alpha t},$ assuming that we
extend $E$ by linearity. In this case, instead of $f(t)$ we write $%
f(\alpha,t)$ and we say that $f(\alpha,t)$ is umbrally represented by $%
\alpha.$ Henceforth, when no confusion occurs, we just say that $f(\alpha,t)$
is the generating function of $\alpha.$ For example for the augmentation
umbra we have $f(\epsilon,t)=1,$ for the unity umbra we have $f(u,t)=e^{t},$
and for the singleton umbra we have $f(\chi,t)=1+t.$ Moreover for the
Bernoulli umbra we have $f(\iota,t)=t/(e^{t}-1)$ (see \cite{SIAM}) and for
the Bell umbra we have $f(\beta,t)=\exp(e^{t}-1)$ (see \cite{dinardoeurop}).

The advantage of an umbral notation for generating functions is the
representation of operations among generating functions through symbolic
operations among umbrae. For example, the product of exponential generating
functions is umbrally represented by a sum of the corresponding umbrae:
\begin{equation}
f(\alpha,t)\,f(\gamma,t)\simeq e^{(\alpha+\gamma)t}\quad\hbox{with}\quad
f(\alpha,t)\simeq e^{\alpha t}\,\,\hbox{and}\,\,f(\gamma,t)\simeq e^{\gamma
t}.   \label{(summation)}
\end{equation}
Then for the inverse of an umbra we have $f(-1 \boldsymbol{.}
\alpha,t) = 1 / f(\alpha,t)$ and for the dot-product of $n$ and
$\alpha$ we have $f(n \boldsymbol{.} \alpha,t) = f(\alpha,t)^n.$
Suppose we replace $R$ by a suitable polynomial ring having
coefficients in $R$ and any
desired number of indeterminates. Then, an umbra is said to be \textit{scalar%
} if the moments are elements of $R$ while it is said to be \textit{%
polynomial} if the moments are polynomials. In this paper, we deal with $%
R[x,y].$ We can define the dot-product of $x$ and $\alpha $ as an auxiliary
umbra having generating functions $f(x\boldsymbol{.}\alpha ,t)=f(\alpha
,t)^{x}.$ This construction has been done formally in \cite{dinardoeurop} where
the moments of $x\boldsymbol{.}\alpha $ have been also computed. A feature
of the classical umbral calculus is the construction of new auxiliary umbrae
by suitable symbolic substitutions. In $n\boldsymbol{.}\alpha $ replace the
integer $n$ by an umbra $\gamma $. The auxiliary umbra $\gamma \boldsymbol{.}%
\alpha $ is the \textit{dot-product} of $\alpha $ and $\gamma $ having
generating function $f(\gamma \boldsymbol{.}\alpha ,t) = f(\gamma ,\log
f(\alpha ,t)).$ If we replace the umbra $\gamma $ with the Bell umbra $\beta
,$ we have $f(\beta \boldsymbol{.}\alpha ,t) = \exp [f(\alpha ,t)-1],$
so that the umbra $\beta \boldsymbol{.}\alpha $ has been called $\alpha $%
-partition umbra. The $\alpha $-partition umbra plays a crucial role in the
umbral representation of the composition of exponential generating
functions. Indeed, first we can consider the polynomial $\alpha $-partition
umbra $x\boldsymbol{.}\beta \boldsymbol{.}\alpha $ with generating function $%
f(x\boldsymbol{.}\beta \boldsymbol{.}\alpha ,t)=\exp [x(f(\alpha ,t)-1)].$
Its moments are \cite{dinardoeurop}
\begin{equation}
E[(x\boldsymbol{.}\beta \boldsymbol{.}\alpha
)^{i}]=\sum_{j=1}^{i}x^{j}\,B_{i,j}(a_{1},a_{2},\ldots ,a_{i-j+1}),
\label{(gr:3ter)}
\end{equation}%
$B_{i,j}$ are the (partial) Bell exponential polynomials \cite{Riordan} and $%
a_{i}$ are the moments of the umbra $\alpha .$ When $x$ is replaced by an
umbra $\gamma ,$ the moments in (\ref{(gr:3ter)}) give the coefficients of
the composition of the generating functions $f(\alpha ,t)$ and $f(\gamma ,t),
$ that is $f(\gamma \boldsymbol{.}\beta \boldsymbol{.}\alpha ,t)=f[\gamma
,f(\alpha ,t)-1].$ The umbra $\gamma \boldsymbol{.}\beta \boldsymbol{.}%
\alpha $ is the \textit{composition umbra} of $\alpha $ and $\gamma .$ If $%
E[\alpha ]\neq 0,$ the umbra $\alpha ^{\scriptscriptstyle<-1>}$ is the
compositional inverse of $\alpha ,$ with generating function $f(\alpha ^{%
\scriptscriptstyle<-1>},t)=f^{\scriptscriptstyle<-1>}(\alpha ,t),$ where $f^{%
\scriptscriptstyle<-1>}(\alpha ,t)$ is the compositional inverse of $%
f(\alpha ,t)$ that is $f[\alpha ^{\scriptscriptstyle<-1>},f(\alpha
,t)-1]=f[\alpha ,f(\alpha ^{\scriptscriptstyle<-1>},t)-1]=1+t.$

\subparagraph{Sheffer umbrae.}

In the following let $\alpha$ and $\gamma$ be scalar umbrae, with $%
g_{1}=E[\gamma] \ne0.$

\begin{definition}
\label{(defshef)} A polynomial umbra $\sigma_x$ is said to be a Sheffer
umbra for $(\alpha,\gamma)$ if $\sigma_x \equiv \alpha + x \boldsymbol{.}
\beta \boldsymbol{.} \gamma^{\scriptscriptstyle <-1>}.$
\end{definition}

In the following, we denote by $\gamma^*$ the $\gamma^{\scriptscriptstyle %
<-1>}$-partition umbra $\beta \boldsymbol{.} \gamma^{\scriptscriptstyle %
<-1>}.$ The umbra $\gamma^*$ is called the \textit{adjoint} umbra of $\gamma$
and in particular $f(\gamma^{*}, t) = \exp[f^{\scriptscriptstyle %
<-1>}(\gamma,t)-1].$ The name parallels the adjoint of an umbral operator
\cite{Roman} since $\gamma\boldsymbol{.} \alpha^{*}$ gives the umbral
composition of $\gamma$ and $\alpha^{\scriptscriptstyle <-1>}.$ So a
polynomial umbra $\sigma_x$ is said to be a Sheffer umbra for $%
(\alpha,\gamma)$ if
\begin{equation}
\sigma_x \equiv \alpha+ x \boldsymbol{.} \gamma^{*}.   \label{(conrep)}
\end{equation}
In the following, we denote a Sheffer umbra by $\sigma_{x}^{(\alpha,\gamma)}$
in order to make explicit the dependence on $\alpha$ and $\gamma.$ From (\ref%
{(conrep)}), the generating function of $\sigma_{x}^{(\alpha,\gamma)}$ is
\begin{equation}
f[\sigma^{(\alpha,\gamma)}_{x}, t] = f(\alpha,t) \, e^{x \, [f^{%
\scriptscriptstyle <-1>}(\gamma,t)-1]}.   \label{gf}
\end{equation}
Note that, given the umbra $\gamma,$ any Sheffer umbra is uniquely
determined by its moments evaluated at $0,$ since via equivalence (\ref%
{(conrep)}) we have $\sigma_{0}^{(\alpha,\gamma)}\equiv \alpha.$

Among Sheffer umbrae, a special role is played by the \textit{associated}
umbra. Let us consider a Sheffer umbra for the umbrae $(\epsilon,\gamma),$
where $\gamma$ has a compositional inverse and $\epsilon$ is the augmentation
umbra.

\begin{definition}
\label{(th1bis)} The polynomial umbra $\sigma_x \equiv x \boldsymbol{.}
\gamma^{*},$ where $\gamma^{*}$ is the adjoint umbra of $\gamma,$ is the
associated umbra of $\gamma.$
\end{definition}

The associated umbrae are polynomial umbrae whose moments $\{p_{n}(x)\}$
satisfy the well-known binomial identity
\begin{equation}
p_{n}(x+y) = \sum_{k=0}^{n} \left(
\begin{array}{c}
n \\
k \\
\end{array}
\right) \, p_{k}(x) \, p_{n-k}(y)   \label{(binom1)}
\end{equation}
for all $n = 0,1,2,\ldots.$ So if $\sigma_{x}^{(\alpha,\gamma)}$ is a
Sheffer umbra whose moments are the Sheffer sequence $\{s_n(x)\},$ then the
moments $\{p_n(x)\}$ of the umbra $\sigma_{x}^{(\epsilon,\gamma)}$ represent
the sequence associated to $\{s_n(x)\}.$

In the following, we recall a generalization of the well-known Sheffer
identity, for the proof see \cite{DNS}.

\begin{theorem}[Generalized Sheffer identity] 
\label{gsi} A polynomial umbra $\sigma_{x}$ is a Sheffer umbra if and only
if there exists an umbra $\gamma,$ provided with a compositional inverse,
such that
\begin{equation}
\sigma_{\eta+\zeta} \equiv \sigma_{\eta} + \zeta \boldsymbol{.} \gamma^{*},
\label{(shefid)}
\end{equation}
for any $\eta, \zeta \in A.$
\end{theorem}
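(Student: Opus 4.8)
The plan is to prove both directions of the equivalence using the generating function representation of Sheffer umbrae given in equation (\ref{gf}), together with the multiplicative behavior of generating functions under the dot-product and sum operations. The key algebraic fact I will rely on is that a sum of uncorrelated umbrae corresponds to a product of generating functions, as in (\ref{(summation)}), and that the dot-product $\zeta \boldsymbol{.} \gamma^{*}$ has generating function $f(\gamma^{*},t)^{\zeta}$, which by the definition of the adjoint umbra equals $\exp[\zeta(f^{\scriptscriptstyle <-1>}(\gamma,t)-1)]$.

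For the forward direction, I would assume $\sigma_x$ is a Sheffer umbra for some pair $(\alpha,\gamma)$, so that by (\ref{(conrep)}) we have $\sigma_x \equiv \alpha + x \boldsymbol{.} \gamma^{*}$. The natural strategy is to compute the generating function of both sides of the claimed identity (\ref{(shefid)}) and verify they agree. Using (\ref{gf}), the left-hand side $\sigma_{\eta+\zeta}$ has generating function $f(\alpha,t)\,\exp[(\eta+\zeta)(f^{\scriptscriptstyle <-1>}(\gamma,t)-1)]$. For the right-hand side, since $\sigma_{\eta}$ and $\zeta \boldsymbol{.} \gamma^{*}$ should be taken as uncorrelated umbrae, their sum has generating function equal to the product $f(\alpha,t)\,\exp[\eta(f^{\scriptscriptstyle <-1>}(\gamma,t)-1)] \cdot \exp[\zeta(f^{\scriptscriptstyle <-1>}(\gamma,t)-1)]$, and the exponents simply add. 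Care must be taken because $\eta$ and $\zeta$ are themselves umbrae rather than scalars, so the substitution $x \mapsto \eta+\zeta$ and the interpretation of $x \boldsymbol{.} \gamma^{*}$ as $\zeta \boldsymbol{.} \gamma^{*}$ must be justified symbolically; this is where I expect to invoke the construction of polynomial umbrae via symbolic substitution described in Section 2.

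For the converse, I would assume that (\ref{(shefid)}) holds for all $\eta,\zeta \in A$ with respect to some $\gamma$ admitting a compositional inverse, and deduce that $\sigma_x$ has the Sheffer form. The cleanest route is to specialize: setting $\eta = \epsilon$ (the augmentation umbra) and $\zeta = x$ reduces (\ref{(shefid)}) to $\sigma_x \equiv \sigma_\epsilon + x \boldsymbol{.} \gamma^{*}$, and then identifying $\alpha := \sigma_0 \equiv \sigma_\epsilon$ gives exactly the defining equivalence (\ref{(conrep)}). The remark following (\ref{gf}) — that a Sheffer umbra is uniquely determined by $\sigma_0 \equiv \alpha$ once $\gamma$ is fixed — is precisely what lets me conclude $\sigma_x$ is Sheffer for $(\alpha,\gamma)$.

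The main obstacle I anticipate is the rigorous handling of the substitution of one umbra for the indeterminate $x$ inside the polynomial umbra $x \boldsymbol{.} \gamma^{*}$, and the associated uncorrelation bookkeeping that guarantees the product rule (\ref{(summation)}) applies when the ``scalar'' indices $\eta$ and $\zeta$ are auxiliary umbrae. Verifying that $\eta \boldsymbol{.} \gamma^{*}$ and $\zeta \boldsymbol{.} \gamma^{*}$ (built from disjoint families of similar copies of $\gamma^{*}$) are genuinely uncorrelated, so that their generating functions multiply, is the delicate technical point; once this additivity of the dot-product in its left argument is in place, both directions reduce to matching exponents in the generating functions, which is routine.
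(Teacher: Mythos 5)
The paper does not actually prove this theorem in the text --- it defers the proof to \cite{DNS} --- but your argument is correct and is essentially the standard one: both directions reduce to the left-additivity of the dot-product over a sum of uncorrelated umbrae, $(\eta+\zeta)\boldsymbol{.}\gamma^{*}\equiv \eta\boldsymbol{.}\gamma^{*}+\zeta\boldsymbol{.}\gamma^{*}$, which at the level of generating functions is exactly the factorization $f(\eta+\zeta,s)=f(\eta,s)\,f(\zeta,s)$ (with $s=f^{\scriptscriptstyle <-1>}(\gamma,t)-1$) that you invoke, and you correctly flag the uncorrelation bookkeeping this requires. The one step to make explicit in the converse is the passage from the umbral variables $\eta,\zeta$ back to the indeterminate $x$: one substitutes for $\zeta$ an umbra with moments $x^{n}$ (e.g.\ $x\boldsymbol{.}u$, whose generating function is $e^{xt}$), so that $\zeta\boldsymbol{.}\gamma^{*}\equiv x\boldsymbol{.}\gamma^{*}$ and $\sigma_{x}\equiv\sigma_{0}+x\boldsymbol{.}\gamma^{*}$ is then exactly Definition \ref{(defshef)} with $\alpha\equiv\sigma_{0}$.
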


\begin{corollary}[The Sheffer identity] 
\label{(si)} A polynomial umbra $\sigma_{x}$ is a Sheffer umbra if and only
if there exists an umbra $\gamma,$ provided with a compositional inverse,
such that
\begin{equation}
\sigma_{x+y}\equiv\sigma_{x} + y \boldsymbol{.} \gamma^{*}.
\label{(shefid2)}
\end{equation}
\end{corollary}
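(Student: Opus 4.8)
The plan is to read the statement as a two-way implication and to obtain the Sheffer identity (\ref{(shefid2)}) as the specialization of the generalized Sheffer identity (\ref{(shefid)}) in which the generic umbrae $\eta,\zeta$ are replaced by umbrae carrying the indeterminates $x,y$. For the direct implication I would start from the hypothesis that $\sigma_x$ is a Sheffer umbra and invoke Theorem \ref{gsi}, which supplies an umbra $\gamma$ with compositional inverse such that $\sigma_{\eta+\zeta}\equiv\sigma_\eta+\zeta\boldsymbol{.}\gamma^*$ for all $\eta,\zeta\in A$. The idea is then to choose $\eta$ and $\zeta$ so as to recover the scalar parameters: I take the umbra $x\boldsymbol{.}u$, whose generating function is $e^{xt}$ and whose moments are therefore the powers $x^n$, in the role of $\eta$, and $y\boldsymbol{.}u$ in the role of $\zeta$.

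Carrying this out requires three elementary reductions, all at the level of generating functions. Writing the composition rule as $f(\mu\boldsymbol{.}\nu,t)=f(\mu,\log f(\nu,t))$ and using $f(x\boldsymbol{.}u,s)=e^{xs}$, I would first compute $(x\boldsymbol{.}u)\boldsymbol{.}\gamma^*\equiv x\boldsymbol{.}\gamma^*$, since $f((x\boldsymbol{.}u)\boldsymbol{.}\gamma^*,t)=e^{x\log f(\gamma^*,t)}=f(\gamma^*,t)^x=f(x\boldsymbol{.}\gamma^*,t)$; by the defining equivalence (\ref{(conrep)}) this gives $\sigma_{x\boldsymbol{.}u}\equiv\alpha+(x\boldsymbol{.}u)\boldsymbol{.}\gamma^*\equiv\sigma_x$. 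The same computation applied to the correction term yields $(y\boldsymbol{.}u)\boldsymbol{.}\gamma^*\equiv y\boldsymbol{.}\gamma^*$. Finally, by the product rule (\ref{(summation)}) the uncorrelated sum satisfies $x\boldsymbol{.}u+y\boldsymbol{.}u\equiv(x+y)\boldsymbol{.}u$, so that the left-hand side becomes $\sigma_{(x+y)\boldsymbol{.}u}\equiv\sigma_{x+y}$. Substituting these equivalences into the generalized identity produces exactly $\sigma_{x+y}\equiv\sigma_x+y\boldsymbol{.}\gamma^*$.

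For the converse I expect a shorter argument that does not even need Theorem \ref{gsi}: assuming (\ref{(shefid2)}) holds for some $\gamma$ admitting a compositional inverse, I would simply set $x=0$. Because $\sigma_x$ is a polynomial umbra, its evaluation $\sigma_0$ at $x=0$ is a scalar umbra; calling it $\alpha$, the identity reads $\sigma_y\equiv\alpha+y\boldsymbol{.}\gamma^*$, which is precisely the defining equivalence (\ref{(conrep)}) of a Sheffer umbra for the pair $(\alpha,\gamma)$. Renaming $y$ back to $x$ shows that $\sigma_x$ is Sheffer, closing the equivalence.

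The step I expect to be delicate is the legitimacy of the substitution in the direct implication: Theorem \ref{gsi} is stated for $\eta,\zeta$ ranging over the alphabet $A$, so one must be sure that $x\boldsymbol{.}u$ and $y\boldsymbol{.}u$ are admissible auxiliary umbrae and that replacing the indeterminates by them is consistent with the way $\sigma_x$ was introduced as a polynomial umbra. In the saturated umbral calculus this is exactly the purpose of auxiliary umbrae, and the equivalence $(c\boldsymbol{.}u)\boldsymbol{.}\gamma^*\equiv c\boldsymbol{.}\gamma^*$ makes the bookkeeping transparent; nonetheless this is where the argument carries its genuine content, the remainder being the routine manipulation of exponential generating functions recalled in Section 2.
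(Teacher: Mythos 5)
Your proof is correct and follows essentially the same route the paper intends: the corollary is the specialization of Theorem \ref{gsi} to $\eta \equiv x \boldsymbol{.} u$ and $\zeta \equiv y \boldsymbol{.} u$ (the paper states it without proof, leaving this specialization implicit), and your converse via $x=0$ recovers exactly the defining equivalence (\ref{(conrep)}).
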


Equivalence (\ref{(shefid2)}) gives the well-known Sheffer identity,
because, setting $s_{n}(x+y)=E[\sigma_{x+y}^{n}],$ $s_{k}(x)=E[%
\sigma_{x}^{k}]$ and $p_{n-k}(y)=E[(y\boldsymbol{.}\gamma^{*})^{n-k}]$ and
by using the binomial expansion, we have
\begin{equation}
s_{n}(x+y)=\sum_{k=0}^{n} {\binom{n }{k}} \,s_{k}(x)\,p_{n-k}(y).
\label{(shefid1)}
\end{equation}
%----------------------------------------------------------

\section{Solving recursions.}

%-----------------------------------------------------------
In this section we show how to use the language of umbrae in order to solve
some recursions. The idea is to
characterize the umbrae $(\alpha, \gamma)$ in (\ref{(conrep)}) in such a way
that the umbra $\gamma$ is characterized by the recursion and the umbra $%
\alpha$ is characterized by the initial condition.

\subparagraph{Integral initial condition.} \label{1} Consider the
following difference equation
\begin{equation}
q_{n}(x+1)=q_{n}(x)+q_{n-1}(x)   \label{recurr1}
\end{equation}
under the condition $\int_{0}^{1}q_{n}(x)dx=1,$ for all nonnegative integers
$n$. We assume $q_n(x)=0$ for negative $n.$ Let $q_n(x) = s_n(x) / n!.$ Recursion (\ref{recurr1}) for $s_n(x)$
becomes
\begin{equation*}
s_{n}(x+1) = s_n(x) + n s_{n-1}(x),
\end{equation*}
for all nonnegative $n.$ This last recursion fits the Sheffer identity (\ref%
{(shefid2)}) if we set $y=1$ and choose the sequence $\{p_{n}(x)\}$ such
that $p_{0}(x)=1,p_{1}(1)=1$ and $p_{n}(1)=0$ for all $n \geq 2.$ This is
true if we choose $p_{n}(x) \simeq (x \boldsymbol{.} \chi)^n,$ that is the
sequence $\{p_{n}(x)\}$ associated to the umbra $u.$ Indeed, since we have
\begin{equation*}
\beta \boldsymbol{.} \chi \equiv \chi \boldsymbol{.} \beta \equiv u \quad %
\hbox{and} \quad \beta \boldsymbol{.} u^{\scriptscriptstyle <-1>} \equiv u^{%
\scriptscriptstyle <-1>} \boldsymbol{.} \beta \equiv \chi
\end{equation*}
(see also \cite{dinardoeurop}), the adjoint of the unity umbra $u$ is $u^{*}
\equiv\beta.u^{\scriptscriptstyle <-1>} \equiv\chi,$ and $x \boldsymbol{.}
\chi \equiv x \boldsymbol{.} u^{*}.$ So we are looking for solutions of (\ref%
{recurr1}) such that
\begin{equation}
q_n(x) \simeq \frac{(\alpha + x \boldsymbol{.} \chi)^n}{n!},
\label{recurr1bis}
\end{equation}
for some umbra $\alpha$ depending on the initial condition $%
\int_{0}^{1}q_{n}(x)dx=1.$ Since
\begin{equation*}
\int_{0}^{1} p(x) dx=E[p(-1 \boldsymbol{.} \iota)]
\end{equation*}
for all $p(x) \in R[x],$ where $- 1 \boldsymbol{.} \iota$ is the inverse of
the Bernoulli umbra (see \cite{Sheppard}), then
\begin{equation*}
\int_{0}^{1}q_{n}(x)dx=1 \Leftrightarrow E[s_n(- 1 \boldsymbol{.} \iota)]=n!
\quad\hbox{for all}\quad n \geq 1 \Leftrightarrow -1 \boldsymbol{.} \iota
\boldsymbol{.} \chi + \alpha \equiv \bar{u},
\end{equation*}
with $\bar{u}$ an umbra whose moments are $E[\bar{u}^n]=n!.$ Thus we have $%
\alpha \equiv \bar{u} + \iota \boldsymbol{.} \chi.$ Replacing this last
result in (\ref{recurr1bis}), solutions of (\ref{recurr1}) are moments of
the Sheffer umbra $\bar{u} + (\iota + x \boldsymbol{.} u) \boldsymbol{.} \chi
$ normalized by $n!,$
\begin{equation*}
q_n(x) \simeq \frac{[\bar{u} + (\iota + x \boldsymbol{.} u) \boldsymbol{.}
\chi]^n}{n!}.
\end{equation*}

\subparagraph{Pascal's recursions.}

Consider the following Pascal's recursion
\begin{equation}
q_{n}(x)=q_{n}(x-1)+q_{n-1}(x)  \label{recurr2}
\end{equation}%
that satisfies the initial condition
\begin{equation}
q_{n}(1-n)=\sum_{i=0}^{n-1}q_{i}(n-2\,i)\quad \hbox{for all}\quad n\geq
1,\,\,\,q_{0}(-1)=1.  \label{icr2}
\end{equation}%
Let $q_{n}(x)=s_{n}(x)/n!$ for all nonnegative $n.$ Recursion
(\ref{recurr2}) for $s_{n}(x)$ becomes
\begin{equation*}
s_{n}(x-1)=s_{n}(x)-ns_{n-1}(x).
\end{equation*}%
This last equation fits the Sheffer identity (\ref{(shefid2)}) if we set $%
y=-1$ and choose the sequence $\{p_{n}(x)\}$ such that $p_{0}(x)=1,$ $%
p_{1}(-1)=-1$ and $p_{n}(-1)=0$ for all $n\geq 2.$ From (\ref{(shefid)}), we
need to have $-1\boldsymbol{.}\gamma ^{\ast }\equiv -\chi \Leftrightarrow
\gamma ^{\ast }\equiv -1\boldsymbol{.}-\chi $ and so $p_{n}(x)\simeq (-x%
\boldsymbol{.}-\chi )^{n}.$ The umbra $-1\boldsymbol{.}-\chi $ is similar to
the umbra $\bar{u},$ defined in Example \ref{1} since
\begin{equation*}
f(-1\boldsymbol{.}-\chi ,t)=\frac{1}{1-t}=f(\bar{u},t).
\end{equation*}%
Then $p_{n}(x)\simeq (-x\boldsymbol{.}-\chi )^{n}\simeq (x\boldsymbol{.}\bar{%
u})^{n}$ and solutions of (\ref{recurr2}) are therefore of the type
\begin{equation}
q_{n}(x)\simeq \frac{(\alpha +x\boldsymbol{.}\bar{u})^{n}}{n!}.
\label{(ccc)}
\end{equation}%
Due to the initial condition (\ref{icr2}), we need to consider a
polynomial sequence such that
\begin{equation}
q_{n}(x-n)\simeq \frac{\lbrack \alpha +(x-n)\boldsymbol{.}\bar{u}]^{n}}{n!}%
\simeq \frac{\lbrack \alpha +(x-1)\boldsymbol{.}\chi ]^{n}}{n!}.
\label{(ccc1)}
\end{equation}
The equivalence on the right of (\ref{(ccc1)}) follows
from the following remarks. Since $f(x\boldsymbol{.}\bar{u}%
,t)=1/(1-t)^{x},$ we have $E[(x\boldsymbol{.}\bar{u})^{n}]=(x)^{n}=x(x+1)%
\cdots (x+n-1)=(x+n-1)_{n}$ and since $f(x\boldsymbol{.}\chi
,t)=(1+t)^{x},$ we have $E[(x\boldsymbol{.}\chi )^{n}]=(x)_{n}$
(see \cite{dinardoeurop}). Therefore,
\begin{equation*}
(x\boldsymbol{.}\bar{u})^{n}\simeq \lbrack (x+n-1)\boldsymbol{.}\chi
]^{n}\Leftrightarrow \lbrack (x-n)\boldsymbol{.}\bar{u}]^{n}\simeq \lbrack
(x-1)\boldsymbol{.}\chi ]^{n}
\end{equation*}
and if $\{s_{n}(x)\}$ is a Sheffer sequence with associated polynomials $(x%
\boldsymbol{.}\bar{u})^{n}/n!,$ then $\{s_{n}(x-n)\}$ is a Sheffer sequence
with associated polynomials $[(x-n)\boldsymbol{.}\bar{u}]^{n}/n!.$ Note that,
having written $q_{n}(x-n)$ as the latter equivalence in (\ref{(ccc1)}), the
values of $q_{n}(1-n)$ in the initial condition (\ref{icr2}) give exactly
the moments of $\alpha $ normalized by $n!.$ By observing that from the
latter equivalence in (\ref{(ccc1)})
\begin{equation*}
q_{n}(x)\simeq \frac{\lbrack \alpha +(x+n-1)\boldsymbol{.}\chi ]^{n}}{n!}%
\simeq \sum_{k=0}^{n}\frac{\alpha ^{k}}{k!}{\binom{{x+n-1}}{{n-k}}},
\end{equation*}
we have a first expression of $q_{n}(x),$ that is
\begin{equation*}
q_{n}(x)=\sum_{k=0}^{n}q_{k}(1-k){\binom{{x+n-1}}{{n-k}}}.
\end{equation*}%
From a computational point of view, this formula is very easy to
implement by using the recursion of the initial condition. One
could eliminate the contribution of this recursion in the
following way. By using the umbral expression of $q_{n}(x-n)$ we
have
\begin{equation}
\sum_{i=0}^{n-1}q_{i}(n-2i)=\sum_{i=0}^{n-1}q_{i}(x-i)|_{x=n-i}\simeq
\sum_{i=0}^{n-1}\frac{[\alpha +(n-i-1)\boldsymbol{.}\chi ]^{i}}{i!}
\label{ee1}
\end{equation}%
and from the initial condition we have
\begin{equation}
\frac{\alpha ^{n}}{n!}\simeq \sum_{j=0}^{n-1}\frac{\alpha ^{j}}{j!}%
\sum_{i=0}^{n-j-1}\frac{[(n-i-j-1)\boldsymbol{.}\chi ]^{i}}{i!}.  \label{ee2}
\end{equation}%
If we consider an umbra $\delta $ such that
\begin{equation*}
\frac{\delta ^{\,n}}{n!}\simeq \sum_{i=0}^{n}\frac{[(n-i)\boldsymbol{.}\chi
]^{i}}{i!},
\end{equation*}%
then equivalence (\ref{ee2}) gives
\begin{equation}
\alpha ^{n}\simeq n\,(\alpha +\delta )^{\,n-1},\quad \hbox{for all}\quad
n\geq 1.  \label{(ccc3)}
\end{equation}%
If we denote by $\gamma _{\scriptscriptstyle D}$ an auxiliary
umbra such that $\gamma _{\scriptscriptstyle D}^{n}\simeq \partial
_{\gamma }\gamma ^{n}\simeq n\gamma ^{n-1},$  then (\ref{(ccc3)})
can be written as $\alpha \equiv (\alpha +\delta )_{\scriptscriptstyle D}.$ Since $%
f(\gamma _{\scriptscriptstyle D},t)=1+tf(\gamma ,t)$ then
$f(\alpha ,t)=1+tf(\alpha ,t)f(\delta ,t)$ gives $f(\alpha
,t)=1/[1-tf(\delta ,t)]$, and in umbral terms $\alpha \equiv \bar{u}\boldsymbol{.}\beta \boldsymbol{%
.}\delta _{{\scriptscriptstyle D}}.$ The solution of recurrence (\ref%
{recurr2}) is therefore
\begin{equation*}
q_{n}(x)\simeq \frac{\lbrack \bar{u}\boldsymbol{.}\beta \boldsymbol{.}\delta
_{{\scriptscriptstyle D}}+(x+n-1)\boldsymbol{.}\chi ]^{n}}{n!}.
\end{equation*}%

\begin{remark}
\rm{We call the umbra $\delta$ in the previous example 
\textit{Fibonacci umbra}. Indeed, because
\begin{equation*}
\delta^{\,n} \simeq n! \sum_{k=0}^{n} \frac{(k \boldsymbol{.} \chi)^{n-k}}{(n-k)!}
\simeq \sum_{k \geq 0} {\binom{n }{k}} \bar{u}^{k} (k \boldsymbol{.}
\chi)^{n-k},
\end{equation*}
we have $\delta \equiv \bar{u} \boldsymbol{.} \beta \boldsymbol{.} \chi_{{%
\scriptscriptstyle D}},$ by using (\ref{(gr:3ter)}), with $x$ replaced by $%
\bar{u},$ and Lemma 15 of \cite{dinardoeurop}. Thus we have
\begin{equation*}
f(\delta,t)=\frac{1}{1-t(1+t)}=\frac{1}{1-t-t^{2}}=\sum_{n \geq 0} F_n t^n,
\end{equation*}
which is the generating function of Fibonacci numbers $\{F_n\}$ with $F_0=1,$
so that $\delta^n \simeq n! F_n.$}
\end{remark}

\subparagraph{Fibonacci's numbers.} Consider the following
difference equation $F_{n}(m)=F_{n}(m-1)+F_{n-1}(m-2)$ under the
condition $F_{n}(0)=1$ for all nonnegative integers $n,$ looking
for a polynomial extension. Replace $m$ with $x+n+1$. Then the
difference equation can be rewritten as
\begin{equation}
F_{n}(x+n+1)=F_{n}(x+n)+F_{n-1}(x+n-1).  \label{recurr3bis}
\end{equation}%
Let $F_{n}(x+n)=s_{n}(x)/n!$ for all nonnegative $n.$ Recursion (\ref%
{recurr3bis}) for $s_{n}(x)$ becomes
\begin{equation*}
s_{n}(x+1)=s_{n}(x)+ns_{n-1}(x).
\end{equation*}%
This last recursion fits the Sheffer identity (\ref{(shefid2)}) if we set $%
y=1$ and choose the sequence $\{p_{n}(x)\}$ such that $p_{0}(x)=1,p_{1}(1)=1$
and $p_{n}(1)=0$ for all $n\geq 2.$ As in the previous paragraph, we are
looking for solutions of (\ref{recurr3bis}) such that
\begin{equation*}
F_{n}(x+n)\simeq \frac{(\alpha +x\boldsymbol{.}\chi )^{n}}{n!}.
\end{equation*}%
Let us observe that equation (\ref{recurr3bis}), for $x=0,$ gives the
well-known recurrence relation for Fibonacci numbers so that $%
n!F_{n}(n)\simeq \delta ^{n},$ where $\delta $ is the Fibonacci
umbra defined in the previous paragraph. Because $n!F_{n}(n)\simeq
\alpha ^{n}$ we have $\alpha \equiv \delta .$ Solutions of
(\ref{recurr3bis}) are such that
\begin{equation*}
F_{n}(x+n)\simeq \frac{(\delta +x\boldsymbol{.}\chi )^{n}}{n!}.
\end{equation*}%
Recalling $\delta \equiv \bar{u}\boldsymbol{.}\beta \boldsymbol{.}\chi _{{%
\scriptscriptstyle D}}$ we find
\begin{equation*}
F_{n}(x+n)\simeq \sum_{k=0}^{n}\frac{(x\boldsymbol{.}\chi )^{n-k}}{(n-k)!}%
\sum_{j=0}^{k}\frac{(j\boldsymbol{.}\chi )^{k-j}}{(k-j)!}\simeq
\sum_{k=0}^{n}\frac{[(x+k)\boldsymbol{.}\chi ]^{n-k}}{(n-k)!}\simeq
\sum_{k=0}^{n}{\binom{{x+k}}{{n-k}}},
\end{equation*}%
by which we can verify that the initial conditions
$F_{n}(0)=F_{n}(-n+n)=1$ hold.

\medskip

In the following example, the recursion does not fit the Sheffer
identity (\ref{(shefid1)}) directly, but we are able to translate
the recursion in an umbral equivalence by which the umbra $\gamma
$ can be still characterized.

\subparagraph{Dyck paths.} A ballot path takes up steps (\textit{u}) and
right steps (\textit{r}), starting at the origin and staying weakly above
the diagonal. For example, \textit{ururuur} is a ballot path to
$\left( 3,4\right).$ We denote by $D\left( n,m\right)$ the number
of ballot paths to $\left( n,m\right)$ under the conditions

\begin{enumerate}
\item no path goes below the diagonal (Dyck path);

\item no path contains the pattern (substring) \textit{urru} (see
Table 1).

\end{enumerate}
\begin{table}[ht]
\centering{\begin{tabular}{c||cccccc}
$m$ & 1 & 7 & 22 & 46 & 82 & 132 \\
6 & 1 & 6 & 16 & 29 & 46 & 63 \\
5 & 1 & 5 & 11 & 17 & 23 & 23 \\
4 & 1 & 4 & 7 & 9 & 9 &  \\
3 & 1 & 3 & 4 & 4 &  &  \\
2 & 1 & 2 & 2 &  &  &  \\
1 & 1 & 1 &  &  &  &  \\
0 & 1 &  &  &  &  &  \\ \hline\hline
& 0 & 1 & 2 & 3 & 4 & $n$%
\end{tabular}}
\caption{The ballot path}
\end{table}

The numbers $D\left( n,m\right) $ follow the difference equation
\begin{equation}
D(n,m)-D(n-1,m)=D(n,m-1)-D(n-2,m-1)+D(n-3,m-1)  \label{ccc4}
\end{equation}%
for all nonnegative $n$ and $m,$ under the initial condition $%
D(n,n)=D(n-1,n),$ and $D\left( 0,0\right) =1.$ Let $D(n,m)=s_{n}(m)/n!.$ The
recursion (\ref{ccc4}) for $s_{n}(m)$ becomes
\begin{equation}
s_{n}(m)-ns_{n-1}(m)=s_{n}(m-1)-\left( n\right) _{2}s_{n-2}(m-1)+\left(
n\right) _{3}s_{n-3}(m-1),  \label{ccc5}
\end{equation}%
and the initial condition becomes $s_{n}(n)=ns_{n-1}(n).$ We are looking for
solutions of Sheffer type $E[\sigma _{x}^{n}]\simeq s_{n}(x),$ i.e. $\sigma
_{x}\equiv \alpha +x\boldsymbol{.}\gamma ^{\ast }.$ Because $%
s_{n}(x)-ns_{n-1}(x)\simeq (\sigma _{x}-\chi )^{n},$ we can replace $\sigma
_{x}$ by $\sigma _{x-1}+\gamma ^{\ast }$ from Sheffer identity (\ref%
{(shefid2)}) with $y=-1.$ Comparing $(\sigma _{x}-\chi )^{n}\simeq
(\sigma _{x-1}+\gamma ^{\ast }-\chi )^{n}$ with the recursion
(\ref{ccc5}), with $x$ replaced by $m,$ a characterization of the
moments $(\gamma ^{\ast })^{n}$ is available. Indeed we have
\begin{equation*}
(\sigma _{x-1}+\gamma ^{\ast }-\chi )^{n}\simeq \sigma
_{x-1}^{n}+n\sum_{i=0}^{n-1}{\binom{{n-1}}{i}}\sigma _{x-1}^{n-i-1}\left[
\frac{(\gamma ^{\ast })^{i+1}}{i+1}-(\gamma ^{\ast })^{i}\right] .
\end{equation*}%
By replacing $x$ with $m$ in the previous equivalence and by applying the
linear functional $E$ to both sides, we have
\begin{equation}
s_{n}(m)-ns_{n-1}(m)\simeq s_{n}(m-1)+n\sum_{i=0}^{n-1}{\binom{{n-1}}{i}}%
s_{n-i-1}(m-1)\left[ \frac{g_{i+1}}{i+1}-g_{i}\right] ,  \label{ccc6}
\end{equation}%
where $E[(\gamma ^{\ast })^{i}]=g_{i}.$ By comparing (\ref{ccc6}) with (\ref%
{ccc5}), we have $g_{0}=1,g_{1}=1,g_{2}=0,g_{k}=k!$ for $k\geq 3.$ Then we
have
\begin{equation*}
f(\gamma ^{\ast },t)=1+t+\sum_{k\geq 3}t^{k}=\frac{1-t^{2}+t^{3}}{1-t}%
\Rightarrow f(x\boldsymbol{.}\gamma ^{\ast },t)=\left[ \frac{1-t^{2}+t^{3}}{%
1-t}\right] ^{x}.
\end{equation*}%
In order to characterize the umbra $\alpha $ we need to use the initial
condition on $s_{n}(x),$ that is $s_{n}(n)=ns_{n-1}(n),$ in umbral terms
\begin{equation}
(\alpha +n\boldsymbol{.}\gamma ^{\ast })^{n}\simeq n(\alpha +n\boldsymbol{.}%
\gamma ^{\ast })^{n-1}.  \label{ccc7bis}
\end{equation}%
Suppose $\alpha \equiv \bar{u}+\zeta .$ In this case the initial condition (%
\ref{ccc7bis}) gives
\begin{equation*}
(\zeta +n\boldsymbol{.}\gamma ^{\ast })^{n}\simeq \epsilon^{n}
\end{equation*}%
for all nonnegative integers $n.$ By applying the binomial expansion, we
have
\begin{equation}
(n\boldsymbol{.}\gamma ^{\ast })^{n}\simeq \sum_{k=1}^{n}{\binom{n}{k}}%
(-\zeta ^{k})(n\boldsymbol{.}\gamma ^{\ast })^{n-k}.  \label{ccc7}
\end{equation}
The following proposition allows us to take a step forward.
\begin{proposition}
\textrm{\label{reci} If $\gamma$ is an umbra with $E[\gamma]=1,$ then for $n
\geq 1$ we have $(x \boldsymbol{.} \gamma^*)^n \simeq x (\eta + x
\boldsymbol{.} \gamma^*)^{n-1},$ with $\eta$ an umbra such that $\eta^n
\simeq (\gamma^{{\scriptscriptstyle <-1>}})^{n+1}$ for $n \geq 1.$ }
\end{proposition}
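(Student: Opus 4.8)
The plan is to convert the claimed family of equivalences, one for each $n\ge1$, into a single identity between exponential generating functions and then compare coefficients. Put $\psi(t)=f^{\scriptscriptstyle <-1>}(\gamma,t)-1$, so that by (\ref{gf}) (taken with $\alpha=\epsilon$) the associated umbra satisfies $f(x\boldsymbol{.}\gamma^{*},t)=\exp[x\,\psi(t)]$, and write $p_n(x)=E[(x\boldsymbol{.}\gamma^{*})^n]$. I first record two facts about $\psi$. Since $f(\gamma^{\scriptscriptstyle <-1>},t)=1+\psi(t)$, the Taylor coefficients of $\psi$ are the moments $c_k=E[(\gamma^{\scriptscriptstyle <-1>})^k]$, i.e. $\psi(t)=\sum_{k\ge1}c_k\,t^k/k!$; and because $\psi$ is the compositional inverse of $f(\gamma,t)-1$, whose linear coefficient is $E[\gamma]=1$, we have $\psi'(0)=c_1=1$. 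This normalization is the only place where the hypothesis $E[\gamma]=1$ is used.

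Next I identify the generating function of $\eta$. From $\eta^n\simeq(\gamma^{\scriptscriptstyle <-1>})^{n+1}$ for $n\ge1$ one gets $f(\eta,t)=1+\sum_{n\ge1}c_{n+1}\,t^n/n!$, whereas $\psi'(t)=\sum_{m\ge0}c_{m+1}\,t^m/m!$. Since $c_1=1$, these two series agree term by term, so $f(\eta,t)=\psi'(t)$. As $\eta$ and $x\boldsymbol{.}\gamma^{*}$ have disjoint supports, (\ref{(summation)}) gives
\[
f(\eta+x\boldsymbol{.}\gamma^{*},t)=f(\eta,t)\,f(x\boldsymbol{.}\gamma^{*},t)=\psi'(t)\,\exp[x\,\psi(t)].
\]
The last step is a single differentiation. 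Differentiating $f(x\boldsymbol{.}\gamma^{*},t)=\exp[x\,\psi(t)]$ in $t$ yields
\[
\frac{d}{dt}\exp[x\,\psi(t)]=x\,\psi'(t)\,\exp[x\,\psi(t)]=x\,f(\eta+x\boldsymbol{.}\gamma^{*},t),
\]
using the identity just obtained. On the left the derivative shifts the index, so the coefficient of $t^{n-1}/(n-1)!$ equals $p_n(x)$; on the right it equals $x\,E[(\eta+x\boldsymbol{.}\gamma^{*})^{n-1}]$. Equating the two for each $n\ge1$ gives $p_n(x)=x\,E[(\eta+x\boldsymbol{.}\gamma^{*})^{n-1}]$, which is exactly the asserted equivalence $(x\boldsymbol{.}\gamma^{*})^n\simeq x(\eta+x\boldsymbol{.}\gamma^{*})^{n-1}$.

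The only genuinely delicate point is the normalization $\psi'(0)=1$: it is what makes $f(\eta,t)$ and $\psi'(t)$ agree in their constant term, hence as full power series, and it is precisely where $E[\gamma]=1$ enters. Everything else is formal bookkeeping with exponential generating functions and the uncorrelation property. A fully umbral alternative would invoke the derivative umbra $\gamma_{\scriptscriptstyle D}$ from the Pascal example to realize $\partial_t$ symbolically, but the generating-function argument above is shorter and introduces no further auxiliary umbrae.
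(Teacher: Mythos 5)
Your proof is correct and follows essentially the same route as the paper's one-line sketch: the paper's instruction to ``compare the generating function of $\eta + x\boldsymbol{.}\gamma^*$ with the one obtained by replacing $(\eta+x\boldsymbol{.}\gamma^*)^n$ by $(x\boldsymbol{.}\gamma^*)^{n+1}/x$'' amounts precisely to your identity $\psi'(t)\exp[x\psi(t)]=\frac{1}{x}\frac{d}{dt}\exp[x\psi(t)]$ together with $f(\eta,t)=\psi'(t)$. You have simply written out in full the details (including the role of $E[\gamma]=1$ in matching the constant terms) that the paper leaves implicit.
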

\begin{proof}
Compare the generating function of the umbra $\eta + x \punt \gamma^*$ with the
one obtained by replacing $(\eta + x \punt \gamma^*)^{n}$ with $(x
\punt \gamma^*)^{n+1} / x.$
\end{proof}
Due to the previous Proposition, with $x$ replaced by $n,$ we have
\begin{equation}
(n \boldsymbol{.} \gamma^*)^n \simeq n (\eta + n \boldsymbol{.}
\gamma^*)^{n-1} \simeq \sum_{k=1}^n {\binom{n }{k}} k [\gamma^{{%
\scriptscriptstyle <-1>}}]^k (n \boldsymbol{.} \gamma^*)^{n-k}.  \label{ccc8}
\end{equation}
By comparing equivalence (\ref{ccc8}) with equivalence (\ref{ccc7}), we have
\begin{equation*}
\zeta^k \simeq - k \, \, \eta^{k-1} \simeq - k [\gamma^{{\scriptscriptstyle %
<-1>}}]^k
\end{equation*}
for $k \geq 1.$ Then, the solution of the recurrence relation (\ref{ccc5})
is
\begin{equation*}
\frac{s_n(x)}{n!} \simeq \frac{(\bar{u} + \zeta + x \boldsymbol{.}
\gamma^*)^n}{n!}.
\end{equation*}
Let us observe that
\begin{equation*}
s_n(x) \simeq \sum_{k=0}^n {\binom{n }{k}} (n-k)! (\zeta + x \boldsymbol{.}
\gamma^*)^k \simeq \sum_{k=0}^n {\binom{n }{k}} (n-k)! {\frac{{x-k} }{x}}(x
\boldsymbol{.} \gamma^*)^k
\end{equation*}
because
\begin{equation*}
x \sum_{j=1}^k {\binom{k }{j}} j \eta^{j-1} (x \boldsymbol{.}
\gamma^*)^{k-j} \simeq x \, k \, (\eta + x \boldsymbol{.} \gamma^*)^{k-1}
\simeq k (x \boldsymbol{.} \gamma^*)^k,
\end{equation*}
due to Proposition \ref{reci}. 
More on counting paths containing a given number of a certain pattern can be found in
\cite{Sapounakis} and \cite{Nie5}. 

\medskip 

We conclude this section stating a generalization of Proposition \ref{reci} which turns out to be
useful in solving the class of recursions involving Sheffer
sequences satisfying the initial condition $s_n(-c \,n)=\delta_{0,n}.$
\begin{theorem}
If $\gamma$ is an umbra with $E[\gamma]=1,$ then for $n \geq 1$ we have $x
(\chi \boldsymbol{.} c \boldsymbol{.} \beta \boldsymbol{.} \eta_{%
\scriptscriptstyle D} + x \boldsymbol{.} \gamma^*)^n \simeq (x + c \, n) (x
\boldsymbol{.} \gamma^*)^n,$ with $\eta$ an umbra such that $\eta^n \simeq
(\gamma^{{\scriptscriptstyle <-1>}})^{n+1}$ for $n \geq 1$ and $c \in R.$
\end{theorem}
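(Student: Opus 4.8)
The plan is to follow the route used for Proposition~\ref{reci}: rewrite the asserted umbral equivalence as a single identity between exponential generating functions and check it. Set
\[
P(t) = f(x\boldsymbol{.}\gamma^{*},t) = \exp\bigl[x\bigl(f^{\scriptscriptstyle <-1>}(\gamma,t)-1\bigr)\bigr],
\qquad
M(t) = f(\chi\boldsymbol{.}c\boldsymbol{.}\beta\boldsymbol{.}\eta_{\scriptscriptstyle D},t).
\]
The equivalence is trivial for $n=0$ (both sides reduce to $x$), so proving it for every $n\ge 1$ amounts to multiplying by $t^{n}/n!$ and summing over $n\ge 0$. Because the factor $n$ in $(x+cn)$ turns $c\sum_{n} n\,E[(x\boldsymbol{.}\gamma^{*})^{n}]\,t^{n}/n!$ into $c\,t\,P'(t)$, while the left-hand side sums to $x\,M(t)\,P(t)$ by the product rule~\eqref{(summation)}, the whole statement is equivalent to
\[
x\,M(t)\,P(t) = x\,P(t) + c\,t\,P'(t).
\]

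The first substantive step is to compute $M(t)$ by unwinding the nested auxiliary umbrae from the inside out. Starting from $f(\eta_{\scriptscriptstyle D},t)=1+t\,f(\eta,t)$, I apply in turn the partition-umbra rule $f(\beta\boldsymbol{.}\eta_{\scriptscriptstyle D},t)=\exp[f(\eta_{\scriptscriptstyle D},t)-1]$, the dot-product rule $f(c\boldsymbol{.}\cdot,t)=f(\cdot,t)^{c}$, and finally $f(\chi\boldsymbol{.}\cdot,t)=f(\chi,\log f(\cdot,t))=1+\log f(\cdot,t)$. I expect this chain to collapse to the clean linear form
\[
M(t) = 1 + c\,t\,f(\eta,t).
\]

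It then remains to invoke the relation established in the proof of Proposition~\ref{reci}: the hypotheses $E[\gamma]=1$ and $\eta^{n}\simeq(\gamma^{\scriptscriptstyle <-1>})^{n+1}$ for $n\ge 1$ give $f(\eta,t)=\frac{d}{dt}f^{\scriptscriptstyle <-1>}(\gamma,t)$, the constant term matching precisely because $E[\gamma]=1$ forces $E[\gamma^{\scriptscriptstyle <-1>}]=1$. Substituting $M(t)=1+c\,t\,f(\eta,t)$ into the target identity, the two $x\,P(t)$ terms cancel and what survives is $x\,f(\eta,t)\,P(t)=P'(t)$, which is nothing but the chain rule applied to $P(t)=\exp[x(f^{\scriptscriptstyle <-1>}(\gamma,t)-1)]$. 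This closes the proof.

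The only genuine labour is the symbolic computation of $M(t)$ in the second paragraph, threading the four successive composition and dot-product rules correctly; the differential verification afterwards is a one-liner. The point of the argument is that the umbra $\chi\boldsymbol{.}c\boldsymbol{.}\beta\boldsymbol{.}\eta_{\scriptscriptstyle D}$ has been built so that its generating function is exactly $1+c\,t\,f(\eta,t)$, i.e.\ the linear-in-$t$ factor whose $t$-coefficient, after one differentiation of $P$, reproduces the term $c\,t\,P'(t)$ demanded by the factor $cn$ on the right-hand side.
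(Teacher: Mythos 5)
Your proof is correct, but it follows a genuinely different route from the paper's. The paper argues entirely at the level of moments: dividing by $x$, it applies Proposition~\ref{reci} to rewrite $\tfrac{cn}{x}(x\boldsymbol{.}\gamma^{*})^{n}$ as $cn(\eta+x\boldsymbol{.}\gamma^{*})^{n-1}$, expands this binomially into $c\sum_{j=1}^{n}\binom{n}{j}j\,\eta^{j-1}(x\boldsymbol{.}\gamma^{*})^{n-j}$, and then recognizes $c\,j\,\eta^{j-1}\simeq(\chi\boldsymbol{.}c\boldsymbol{.}\beta)^{j}\eta_{\scriptscriptstyle D}^{j}\simeq(\chi\boldsymbol{.}c\boldsymbol{.}\beta\boldsymbol{.}\eta_{\scriptscriptstyle D})^{j}$, so the binomial theorem reassembles the sum into $(\chi\boldsymbol{.}c\boldsymbol{.}\beta\boldsymbol{.}\eta_{\scriptscriptstyle D}+x\boldsymbol{.}\gamma^{*})^{n}$. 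You instead pass to exponential generating functions once and for all, reducing the whole family of identities to $x\,M(t)\,P(t)=x\,P(t)+c\,t\,P'(t)$; your unwinding of $M(t)$ to $1+c\,t\,f(\eta,t)$ is right (it is the generating-function counterpart of the paper's moment identity $(\chi\boldsymbol{.}c\boldsymbol{.}\beta\boldsymbol{.}\eta_{\scriptscriptstyle D})^{j}\simeq c\,j\,\eta^{j-1}$), and your remark that $E[\gamma]=1$ forces $E[\gamma^{\scriptscriptstyle <-1>}]=1$ is exactly what makes $f(\eta,t)=\tfrac{d}{dt}f^{\scriptscriptstyle <-1>}(\gamma,t)$ hold including the constant term. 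Note that your closing identity $P'(t)=x\,f(\eta,t)\,P(t)$ is precisely Proposition~\ref{reci} rewritten in generating-function form, so the two proofs lean on the same underlying fact. What the paper's route buys is an explicit moment-level derivation that shows how the auxiliary umbra $\chi\boldsymbol{.}c\boldsymbol{.}\beta\boldsymbol{.}\eta_{\scriptscriptstyle D}$ is assembled term by term; what yours buys is a shorter verification in which the factor $(x+cn)$ becomes a transparent first-order differential relation and the role of the hypothesis $E[\gamma]=1$ is laid bare.
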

\begin{proof}
Due to Proposition \ref{reci}, we have
$$\frac{(x+cn)}{x} (x \punt \gamma^{*})^n \simeq (x \punt \gamma^*)^n + c n (\eta + x \punt \gamma^*)^{n-1} 
 \simeq  (x \punt \gamma^*)^n + c \sum_{j=1}^{n} {n \choose j} j \eta^{j-1} (x \punt \gamma^*)^{n-j}.$$
The result follows by observing that $j \, \eta^{j-1} \simeq \eta_{\scriptscriptstyle D}^{j}$ and
$c \simeq (\chi \punt c \punt \beta)^j,$ for $j=1, 2, \ldots, n,$ so that
$c \, j \, \eta^{j-1} \simeq (\chi \punt c \punt \beta)^j \eta_{\scriptscriptstyle D}^{j}
\simeq [(\chi \punt c \punt \beta)\eta_{\scriptscriptstyle D}]^j \simeq (\chi \punt c \punt \beta \punt \eta_{\scriptscriptstyle D})^j.$
\end{proof}
If $\gamma$ is the umbra characterized by a linear recursion and $s_n(-c \,
n)=\delta_{0,n}$ is the initial condition, then the previous proposition
states that solutions have the form $(\chi \boldsymbol{.} c \boldsymbol{.}
\beta \boldsymbol{.} \eta_{\scriptscriptstyle D} + x \boldsymbol{.}
\gamma^*)^n.$

\end{document}